\newtheorem{thm}{Theorem}
\newtheorem{lemma}{Lemma}
\newtheorem{corollary}{Corollary}
\newcommand{\red}{\text{red}}
\newcommand{\weight}{\text{weight}}
\newcommand{\inv}{\text{inv}}
\newcommand{\omat}{\boldsymbol{1}_{n}}
\newcommand{\zmat}{\boldsymbol{0}_{n}}
\begin{document}

\title{Using functional equations to enumerate $1324$-avoiding permutations}

\author{Fredrik Johansson\thanks{RISC, Johannes Kepler University, 4040 Linz, Austria. [fredrik.johansson@risc.jku.at]} \; and Brian Nakamura\thanks{CCICADA, Rutgers University-New Brunswick, Piscataway, NJ, USA. [bnaka@dimacs.rutgers.edu]}}

\date{}

\maketitle

\begin{abstract}
	We consider the problem of enumerating permutations with exactly $r$ occurrences of the pattern $1 3 2 4$ and derive functional equations for this general case as well as for the pattern avoidance ($r=0$) case. The functional equations lead to a new algorithm for enumerating length $n$ permutations that avoid $1 3 2 4$. This approach is used to enumerate the $1 3 2 4$-avoiders up to $n = 31$. We also extend those functional equations to account for the number of inversions and derive analogous algorithms.
\end{abstract}

\section{Introduction}

Let $a_{1} \ldots a_{k}$ be a sequence of $k$ distinct positive integers. We define the \emph{reduction} of this sequence, denoted by $\red(a_{1} \ldots a_{k})$, to be the length $k$ permutation $\tau = \tau_{1} \ldots \tau_{k}$ that is order-isomorphic to $a_{1} \ldots a_{k}$ (i.e., $a_{i} < a_{j}$ if and only if $\tau_{i} < \tau_{j}$ for every $i$ and $j$). Given a (permutation) pattern $\tau \in \mathcal{S}_{k}$, we say that a permutation $\pi = \pi_{1} \ldots \pi_{n}$ \emph{contains} the pattern $\tau$ if there exists $1 \leq i_{i} < i_{2} < \ldots < i_{k} \leq n$ such that $\red(\pi_{i_{1}} \pi_{i_{2}} \ldots \pi_{i_{k}}) = \tau$, in which case we call $\pi_{i_{1}} \pi_{i_{2}} \ldots \pi_{i_{k}}$ an \emph{occurrence} of $\tau$. We also define $N_{\tau}(\pi)$ to be the number of occurrences of pattern $\tau$ in the permutation $\pi$. For example, if the pattern $\tau = 1 2 3$, the permutation $5 3 4 1 2$ avoids the pattern $\tau$ (so $N_{1 2 3}(5 3 4 1 2) = 0$), whereas the permutation $5 2 1 3 4$ contains two occurrences of $\tau$ (so $N_{1 2 3}(5 2 1 3 4) = 2$).

For a pattern $\tau$ and non-negative integer $r \geq 0$, we define the set
\begin{align*}
	\mathcal{S}_{n}(\tau,r) := \left\{ \pi \in \mathcal{S}_{n} : \pi \text{ has exactly } r \text{ occurrences of the pattern } \tau \right\}
\end{align*}
and also define $s_{n}(\tau,r) := \left| \mathcal{S}_{n}(\tau,r) \right|$. For the $r=0$ case, we say that the two patterns $\sigma$ and $\tau$ are \emph{Wilf-equivalent} if $s_{n}(\sigma, 0) = s_{n}(\tau,0)$ for all $n$. Additionally, two patterns that are Wilf-equivalent are said to belong to the same \emph{Wilf-equivalence class}. Note that the $r=0$ case corresponds with ``classical'' pattern avoidance, which has been well-studied. Work on the more general problem ($r \geq 0$) has usually been restricted to patterns of length $3$ and small $r$. Some recent work include \cite{bona:f132, bona:prec132, bur:f321, callan, fulmek, manvain, noonan, noonzeil:forbid}.

A little more is known for the pattern avoidance problem, but the problem quickly gets difficult. For each $\tau \in \mathcal{S}_{3}$, it is well known that $s_{n}(\tau,0) = \frac{1}{n+1}{2 n \choose n}$ (the Catalan numbers) \cite{knuth}. For the length $4$ patterns, there are three cases (Wilf-equivalence classes) to consider: $1 2 3 4$, $1 3 4 2$, and $1 3 2 4$. The enumeration for the $1 2 3 4$-avoiding permutations was solved by Gessel in \cite{gessel}. Later, B{\'o}na solved the case for the $1 3 4 2$-avoiding permutations in \cite{bona:wc1342}. The pattern $1 3 2 4$, however, has been notoriously difficult to enumerate.

There is currently no non-recursive formula known for computing $s_{n}(1 3 2 4, 0)$, and precise asymptotics are not known either. Marinov and Radoi{\v{c}}i{\'c} developed an approach in \cite{marrad} using generating trees and computed $s_{n}(1 3 2 4, 0)$ for $n \leq 20$. Another approach (using insertion encoding) was used by Albert et al.~in \cite{aerwz} to compute $s_{n}(1 3 2 4, 0)$ for $n \leq 25$ (five more terms). Given the difficulty of this pattern, Zeilberger has even conjectured that ``Not even God knows $s_{1000}(1 3 2 4, 0)$'' \cite{eldvat}.

Given the difficulty of exact enumeration, work has also been done on studying how the sequence $s_{n}(\tau,0)$ grows for various patterns. We define the \emph{Stanley-Wilf limit} of a pattern $\tau$ to be:
\begin{align}
	L(\tau) := \mathop{\lim} \limits_{n \rightarrow \infty} {(s_{n}(\tau, 0))^{1/n}}.
\end{align}
Thanks to results by Arratia \cite{arratia} and Marcus and Tardos \cite{martar}, we know that for each pattern $\tau$, the limit $L(\tau)$ exists and is finite. For patterns of length three, the Stanley-Wilf limit is known to be $4$. Additionally, Regev \cite{regev} showed that $L(1 2 3 4) = 9$, while B{\'o}na's result in \cite{bona:wc1342} gives us $L(1 3 4 2) = 8$. The exact limit for the pattern $1 3 2 4$, however, is still unknown. The best known lower bound is by Albert et al. \cite{aerwz}, who showed that $L(1 3 2 4) \geq 9.47$. The best known upper bound has seen some improvements in recent years. The recent ``best'' upper bound was improved by Claesson, Jel{\'{\i}}nek, and Steingr{\'{\i}}msson in \cite{cjs:ub1324} to $L(1 3 2 4) \leq 16$. That approach was then refined by B{\'o}na in \cite{bona:ub1324} to show that $L(1 3 2 4) < (7 + 4 \sqrt{3}) \approx 13.93$.

Additionally, Claesson, Jel{\'{\i}}nek, and Steingr{\'{\i}}msson conjectured that the number of length $n$ permutations avoiding $1 3 2 4$ with exactly $k$ inversions was non-decreasing in $n$ (for each fixed $k$). They show that if this conjecture holds, then $L(1 3 2 4) \leq e^{\pi \sqrt{2/3}} \approx 13.002$. Neither the current lower bound nor this potential new upper bound appear to be ``close'' to the exact value of $L(1 3 2 4)$. For example, Steingr{\'{\i}}msson's survey paper \cite{stein:survey} considers empirical data and suggests that it may be close to $11$. Some data we consider in this paper suggests a similar story.

The paper is organized in the following manner. In Section~\ref{pat1324}, we derive functional equations for computing $s_{n}(1 3 2 4,r)$. Furthermore, the approach is specialized to the avoidance case ($r=0$) to derive an algorithm for enumerating the $1 3 2 4$-avoiding permutations. In Section~\ref{compres}, we describe technical details on the algorithm and use it to compute $s_{n}(1 3 2 4,0)$ up to $n=31$, giving us $6$ new terms. We use this new data to make some empirical observations. In Section~\ref{permstats}, we extend the functional equations to keep track of the number of inversions. We conclude with a few final remarks in Section~\ref{concl}. \\

\section{Functional equations for the pattern $1 3 2 4$}\label{pat1324}

We begin by extending the functional equations approach in \cite{naka:gwilf2} to the pattern $1 3 2 4$. We will first derive functional equations that can be used to compute $s_{n}(1 3 2 4, r)$. The approach will be presented in full detail so that this article is self-contained. We then specialize this approach to the $r=0$ case and develop a new algorithm for enumerating the $1 3 2 4$-avoiding permutations.

\subsection{A general approach to $s_{n}(1 3 2 4, r)$}

Given a non-negative integer $n$, we define the polynomial (in the variable $t$)
\begin{align}
	f_{n}(t) := \mathop{\sum} \limits_{\pi \in \mathcal{S}_{n}} t^{N_{1 3 2 4}(\pi)}.
\end{align}
Observe that the coefficient of $t^{r}$ in $f_{n}(t)$ is exactly equal to $s_{n}(1 3 2 4, r)$.

In addition to the variable $t$, we introduce $n(n+1)/2$ catalytic variables $x_{i,j}$ with $1 \leq i \leq j \leq n$ and $n(n+1)/2$ catalytic variables $y_{i,j}$ with $1 \leq j \leq i \leq n$. Note that the subscripts of the two sets of catalytic variables range over different quantities. We define the weight of a permutation $\pi = \pi_{1} \ldots \pi_{n}$ to be 
\begin{gather*}
	\weight(\pi) := \\
	t^{N_{1 3 2 4}(\pi)} \mathop{\prod} \limits _{1 \leq i \leq j \leq n} x_{i,j}^{\# \{ (a,b) \; : \; \pi_{a} < \pi_{b} , \; \pi_{a} = i  , \; \pi_{b} > j\}} \cdot \mathop{\prod} \limits _{1 \leq j \leq i \leq n} y_{i,j}^{\# \{ (a,b,c) \; : \; \pi_{b} < \pi_{a} < \pi_{c} , \; \pi_{a} = i  ,\; \pi_{b} \geq j\}}
\end{gather*}
where it is always assumed that $1 \leq a < b < c \leq n$. For example, 
\begin{align*}
	\weight(2 1 3) &= x_{1,1} x_{1,2} x_{2,1} x_{2,2} \cdot y_{2,1}\\
	\weight(4 1 3 2 5) &= t \cdot x_{1,1}^{3} x_{1,2}^{2} x_{1,3} x_{1,4} x_{2,2} x_{2,3} x_{2,4} x_{3,3} x_{3,4} x_{4,4} \cdot y_{3,1} y_{3,2} y_{4,1}^{3} y_{4,2}^{2} y_{4,3}.
\end{align*}
In essence, the weight stores information about $1 3 2 4$ patterns, $2 1 3$ patterns (which may become the ``$3 2 4$'' of a $1 3 2 4$ occurrence), and $1 2$ patterns (which may become the ``$1 3$'' of a $2 1 3$ occurrence) through the exponents of the variable $t$, the variables $y_{i,j}$, and the variables $x_{i,j}$, respectively.

We will define a multi-variate polynomial $P_{n}$ on all the previously defined variables. For notational convenience, we first write the $x_{i,j}$ variables and the $y_{i,j}$ variables as matrices of variables:
\begin{align}
	X_{n} := \left[ \begin{array}{ccccc} 
		x_{1,1} & & \cdots & & x_{1,n}\\
		 & \ddots & & & \\
		\vdots & & x_{i,i} & & \vdots \\
		 & & & \ddots & \\
		x_{n,1} & & \cdots & & x_{n,n}
		\end{array} \right], \; \; \; \;
	Y_{n} := \left[ \begin{array}{ccccc} 
		y_{1,1} & & \cdots & & y_{1,n}\\
		 & \ddots & & & \\
		\vdots & & y_{i,i} & & \vdots \\
		 & & & \ddots & \\
		y_{n,1} & & \cdots & & y_{n,n}
		\end{array} \right]
\end{align}
where we will disregard the entries below the diagonal in $X_{n}$ and the entries above the diagonal in $Y_{n}$.

For each $n$, we now define the multi-variate polynomial:
\begin{align*}
	P_{n}(t; X_{n}, Y_{n}) := \mathop{\sum} \limits_{\pi \in \mathcal{S}_{n}} \weight(\pi)
\end{align*}
Observe that $P_{n}(t; \omat, \omat) = f_{n}(t)$ is our desired polynomial, where $\omat$ is the $n \times n$ matrix of all $1$'s. For a fixed $r \geq 0$, our goal is to quickly compute the coefficient of $t^{r}$ in $P_{n}(t; \omat, \omat)$, which is exactly $s_{n}(1 3 2 4,r)$. We will do this by deriving a functional equation for $P_{n}$. This follows readily from the following result:

\begin{lemma}
	\label{lemma1324}
	Let $\pi = \pi_{1} \ldots \pi_{n}$ and suppose that $\pi_{1} = i$. If $\pi^{\prime} := \red(\pi_{2} \ldots \pi_{n})$, then
	\begin{align*}
		\weight(\pi) = x_{i,i}^{n-i} x_{i,i+1}^{n-i-1} \ldots x_{i,n-1}^{1} \cdot \weight(\pi^{\prime}) \vert_{A^{\prime}} \quad ,
	\end{align*}
	where $A^{\prime}$ is the set of substitutions given by
	\begin{align*}
		A^{\prime} := \begin{cases}
			x_{b,c} \rightarrow x_{b,c+1} & b < i, c \geq i\\
			x_{b,c} \rightarrow x_{b+1,c+1} & b \geq i, c \geq i\\
			x_{b,c} \rightarrow y_{i,1} y_{i,2} \ldots y_{i,b} \cdot x_{b,c} \cdot x_{b,c+1} & b < i, c = i-1\\
			y_{b,c} \rightarrow y_{b+1,c} & b \geq i, c < i\\
			y_{b,c} \rightarrow y_{b+1,c+1} & b \geq i, c > i\\
			y_{b,c} \rightarrow t y_{b+1,c} \cdot y_{b+1,c+1} & b \geq i, c = i \quad .
		\end{cases}
	\end{align*}
\end{lemma}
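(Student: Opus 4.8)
The plan is to prove the identity occurrence-by-occurrence, exploiting that $A'$ is a monomial substitution and hence extends to a ring homomorphism; it therefore suffices to understand how $A'$ transforms the contribution of each individual pattern occurrence recorded in $\weight(\pi')$. First I would record the contribution of a single occurrence. A $12$-pattern, i.e.\ a pair of positions $a<b$ with $\pi_a<\pi_b$, contributes the monomial $\prod_{j=\pi_a}^{\pi_b-1}x_{\pi_a,j}$; a $213$-pattern at positions $a<b<c$ contributes $\prod_{j=1}^{\pi_b}y_{\pi_a,j}$; and each $1324$-pattern contributes a single factor $t$. Thus $\weight(\pi)$ is the product of these monomials over all occurrences, and likewise for $\weight(\pi')$.

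Next I would classify the occurrences of $\pi$ by whether or not they use position $1$ (value $i$). Letting $\phi$ be the relabeling that sends a value $u$ of $\pi'=\red(\pi_2\ldots\pi_n)$ to $u$ if $u<i$ and to $u+1$ if $u\ge i$, the occurrences not using position $1$ are in bijection, via $\phi$ together with the shift of positions by one, with the occurrences of the same type in $\pi'$. The occurrences that do use position $1$ fall into three families: the $12$-patterns $(1,q)$; the $213$-patterns with the middle entry at position $1$ (forcing $\pi_p<i<\pi_q$ for the other two positions $p<q$); and the $1324$-patterns whose ``$1$'' sits at position $1$ (forcing the other three entries to form a $213$-pattern with all values $>i$). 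I would check directly that the product of the $12$-pattern contributions from the pairs $(1,q)$ equals the prefactor $x_{i,i}^{n-i}\cdots x_{i,n-1}^{1}$, since exactly $n-j$ of the entries $\pi_2,\ldots,\pi_n$ exceed each $j\ge i$.

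It then remains to verify that applying $A'$ to the per-occurrence monomials of $\pi'$ reproduces the per-occurrence monomials of the corresponding occurrences of $\pi$, together with precisely the extra weight of the position-$1$ families (with the convention that any variable matching no listed case is left fixed). For a $12$-pattern of $\pi'$ with smaller value $b$, rules $1$ and $2$ (and the identity on the unlisted variables with $b<i$, $c\le i-2$) carry out the value shift $\phi$ across the index ranges; the subtle case is $b<i$ with the larger value exceeding $i$, where the boundary variable $x_{b,i-1}$ must, via rule $3$, simultaneously supply the extra $x$-factor created by the shift and the full monomial $y_{i,1}\cdots y_{i,b}$ of the associated middle-at-$1$ $213$-pattern; one checks that the two index ranges telescope correctly across $c=i-1$. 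Symmetrically, for a $213$-pattern of $\pi'$ with smallest value $s$, rules $4$ and $5$ perform the shift, while if $s\ge i$ the boundary variable $y_{m,i}$ must, via rule $6$, supply the shifted $y$-factor and the single factor $t$ recording the new $1324$-pattern obtained by prepending $i$; again the ranges telescope across $c=i$.

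The main obstacle is exactly this bookkeeping at the two boundary rules $3$ and $6$, where one variable of $\pi'$ is asked to do double duty: correcting the reindexing forced by the reduction while also injecting the weight of a genuinely new occurrence created by restoring $\pi_1=i$. Establishing the two underlying bijections cleanly — pairs with $\pi_p<i<\pi_q$ against middle-at-$1$ $213$-patterns, and $\pi'$-$213$-patterns with smallest value $\ge i$ against $1324$-patterns using position $1$ — and confirming that the telescoping of the product ranges leaves no stray or missing factors, is where all the care lies. Once these are in place, multiplying the transformed weight of $\pi'$ by the prefactor reassembles $t^{N_{1324}(\pi)}$ together with the full $x$- and $y$-weights of $\pi$, yielding the claimed identity.
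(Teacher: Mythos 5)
Your proposal is correct and follows essentially the same route as the paper's proof: both account for the value-shift reindexing caused by removing $\pi_1=i$, the new $213$ occurrences created by pairs straddling $i$ (the $y_{i,1}\cdots y_{i,b}$ factor in rule 3), the new $1324$ occurrences created by $213$s with all values above $i$ (the factor $t$ in rule 6), and the prefactor $x_{i,i}^{n-i}\cdots x_{i,n-1}^{1}$ coming from the new $12$-pairs. Your occurrence-by-occurrence bookkeeping, using multiplicativity of the substitution $A'$, is just a finer-grained phrasing of the paper's exponent-level verification and fills in the same boundary cases the paper treats via its equal-exponent observations.
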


\begin{proof}
	We assume $i$ to be a fixed value. Observe that $N_{1 3 2 4}(\pi)$ is equal to the number of occurrences of $1 3 2 4$ in $\pi_{2} \ldots \pi_{n}$ \textbf{plus} the number of occurrences of $2 1 3$ in $\pi_{2} \ldots \pi_{n}$, where the term corresponding to the ``$1$'' is greater than $i$.
	
	If we re-insert $i$ at the beginning of $\pi^{\prime}$, we would shift all the terms $i, i+1, \ldots , n-1$ up by $1$. This gives us the substitutions:
	\begin{align*}
			x_{b,c} & \rightarrow x_{b,c+1} && b < i, c \geq i\\
			x_{b,c} & \rightarrow x_{b+1,c+1} && b \geq i, c \geq i\\
			y_{b,c} & \rightarrow y_{b+1,c} && b \geq i, c < i\\
			y_{b,c} & \rightarrow y_{b+1,c+1} && b \geq i, c > i.
	\end{align*}
	
	We make a few more observations. First, in $\weight(\pi)$, the exponents of $x_{k,i-1}$ and $x_{k,i}$ are equal and the exponents of $y_{k,i}$ and $y_{k,i+1}$ are equal for each $k$ (since $\pi_{1} = i$). This gives the substitutions $x_{b,i-1} \rightarrow x_{b,i-1} \cdot x_{b,i}$ if $b < i$ and $y_{b,i} \rightarrow y_{b+1,i} \cdot y_{b+1,i+1}$ if $b \geq i$.
	
	Second, the number of $1 3 2 4$ patterns that include the first term $\pi_{1} = i$ is the sum of the exponents of $y_{j,i+1}$ for $i+1 \leq j \leq n$. The substitution $y_{b,i} \rightarrow y_{b+1,i} \cdot y_{b+1,i+1}$ changes to $y_{b,i} \rightarrow t y_{b+1,i} \cdot y_{b+1,i+1}$ (for $b \geq i$).
	
	Third, the number of $2 1 3$ patterns that include the first term $\pi_{1} = i$ (i.e., the ``$2$'' term is equal to $i$) and whose ``$1$'' term is at least $k$ is equal to the sum of the exponent of $x_{j,i}$ for $k \leq j \leq i-1$. The substitution $x_{b,i-1} \rightarrow x_{b,i-1} \cdot x_{b,i}$ changes to $x_{b,i-1} \rightarrow y_{i,1} y_{i,2} \ldots y_{i,b} \cdot x_{b,i-1} \cdot x_{b,i}$ (for $b < i$).
	
	This gives us our substitution set $A^{\prime}$. Finally, the new ``$i$'' would create new $1 2$ patterns and would require an extra factor of $x_{i,i}^{n-i} x_{i,i+1}^{n-i-1} \ldots x_{i,n-1}^{1}$ for the weight. 
\end{proof}

Now, we define the operator $R_{1}$ on an arbitrary $n \times n$ square matrix $Y_{n}$ and $i < n$ to be:
\begin{align}
	R_{1}(Y_{n},i) := \left[ \begin{array}{ccccccc} 
		y_{1,1} & \cdots & y_{1,i-1} & t y_{1,i} y_{1,i+1} & y_{1,i+2} & \cdots & y_{1,n} \\
		\vdots & \ddots & & \vdots & & & \vdots \\
		y_{i-1,1} &  & y_{i-1,i-1} &  & \cdots & & y_{i-1,n} \\
		y_{i+1,1} & \cdots & y_{i+1,i-1} & t y_{i+1,i} y_{i+1,i+1} & y_{i+1,i+2} & \cdots & y_{i+1,n} \\
		\vdots & & \vdots & \vdots & \ddots & & \vdots\\
		\vdots & & \vdots & \vdots & & \ddots & \vdots\\
		y_{n,1} & \cdots & y_{n,i-1} & t y_{n,i} y_{n,i+1} & y_{n,i+2} & \cdots & y_{n,n}
		\end{array} \right]. \label{R1}
\end{align}
If $i = n$, then $R_{1}(Y_{n},i)$ is defined to be the $(n-1) \times (n-1)$ matrix obtained by deleting the $n$-th row and $n$-th column from $Y_{n}$. In essence, the $R_{1}$ operator deletes the $i$-th row, merges the $i$-th and $(i+1)$-th columns (via term-by-term multiplication), and multiplies this new column by a factor of $t$. It is important to note that while this operator is defined on any $n \times n$ matrix, it will only be applied to our ``matrix of variables'' $Y_{n}$ to get a smaller $(n-1) \times (n-1)$ matrix.

In addition, we define another operator $R_{2}$ on two $n \times n$ square matrices $X_{n}$ and $Y_{n}$ and $1 < i \leq n$ to be: 
\begin{align}
	R_{2}(X_{n},Y_{n},i) := \left[ \begin{array}{ccccccc} 
		x_{1,1} & \cdots & x_{1,i-2} & w_{1} & x_{1,i+1} & \cdots & x_{1,n} \\
		\vdots & \ddots & & \vdots & & & \vdots \\
		x_{i-2,1} & \cdots & x_{i-2,i-2} & w_{i-2} & x_{i-2,i+1} & \cdots & x_{i-2,n} \\
		x_{i-1,1} & \cdots & x_{i-1,i-2} & w_{i-1} & x_{i-1,i+1} & \cdots & x_{i-1,n} \\
		x_{i+1,1} & \cdots & x_{i+1,i-2} & w_{i+1} & x_{i+1,i+1} & \cdots & x_{i+1,n} \\
		\vdots & & \vdots & \vdots & & \ddots & \vdots\\
		x_{n,1} & \cdots & x_{n,i-2} & w_{n} & x_{n,i+1} & \cdots & x_{n,n} \\
		\end{array} \right] \label{R2}
\end{align}
where 
\begin{align*}
	w_{k} := \begin{cases}
		 y_{i,1} y_{i,2} \ldots y_{i,k} \cdot x_{k,i-1} \cdot x_{k,i} & k \leq i-1\\
		0 & k > i-1 \quad .
	\end{cases}
\end{align*}
If $i=1$, then $R_{2}(X_{n},Y_{n},i)$ is defined to be the $(n-1) \times (n-1)$ matrix obtained by deleting the $1$-st row and $1$-st column from $X_{n}$. In essence, the $R_{2}$ operator modifies $X_{n}$ by deleting the $i$-th row, merging the $(i-1)$-th column with the $i$-th column (via term-by-term multiplication), and scaling that new column by products of terms from $Y_{n}$.

Lemma~\ref{lemma1324} now directly leads to the following:
\begin{thm}
	For the pattern $\tau = 1 3 2 4$, 
	\begin{align}
		P_{n}(t; X_{n},Y_{n}) = \mathop{\sum} \limits_{i=1}^{n} {x_{i,i}^{n-i} x_{i,i+1}^{n-i-1} \ldots x_{i,n-1}^{1} \cdot P_{n-1}(t; \; R_{2}(X_{n},Y_{n},i), \; R_{1}(Y_{n},i))}. \tag{FE1324} \label{FE1324}
	\end{align}
\end{thm}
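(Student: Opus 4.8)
The plan is to prove \eqref{FE1324} by conditioning on the first letter of the permutation and reducing to $\mathcal{S}_{n-1}$ via the reduction map, using Lemma~\ref{lemma1324} as the engine. First I would partition the defining sum
\begin{align*}
	P_{n}(t; X_{n}, Y_{n}) = \sum_{\pi \in \mathcal{S}_{n}} \weight(\pi)
\end{align*}
according to the value $i = \pi_{1}$ of the first entry, writing $P_{n} = \sum_{i=1}^{n} \sum_{\pi \, : \, \pi_{1} = i} \weight(\pi)$. For each fixed $i$, the map $\pi \mapsto \pi' := \red(\pi_{2} \ldots \pi_{n})$ is a bijection between $\{\pi \in \mathcal{S}_{n} : \pi_{1} = i\}$ and $\mathcal{S}_{n-1}$, so the inner sum may be reindexed over $\pi' \in \mathcal{S}_{n-1}$.

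Next I would feed Lemma~\ref{lemma1324} into each inner sum. The lemma gives $\weight(\pi) = x_{i,i}^{n-i} x_{i,i+1}^{n-i-1} \ldots x_{i,n-1}^{1} \cdot \weight(\pi')|_{A'}$, where the prefactor depends only on $i$, not on $\pi'$. Since the substitution $A'$ sends each $(n-1)$-indexed variable to a fixed monomial in the $n$-indexed variables, it extends uniquely to a ring homomorphism on polynomials, and in particular commutes with the finite sum over $\pi'$. Hence
\begin{align*}
	\sum_{\pi \, : \, \pi_{1} = i} \weight(\pi) = x_{i,i}^{n-i} \ldots x_{i,n-1}^{1} \cdot \Bigl( \sum_{\pi' \in \mathcal{S}_{n-1}} \weight(\pi') \Bigr)\Big|_{A'} = x_{i,i}^{n-i} \ldots x_{i,n-1}^{1} \cdot P_{n-1}(t; X_{n-1}, Y_{n-1})\big|_{A'},
\end{align*}
and summing over $i$ produces the right-hand side of \eqref{FE1324}, \emph{provided} we identify $P_{n-1}|_{A'}$ with $P_{n-1}(t; R_{2}(X_{n}, Y_{n}, i), R_{1}(Y_{n}, i))$.

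The crux, and the step I expect to be the main obstacle, is precisely that last identification: showing that evaluating $P_{n-1}$ at the $(n-1) \times (n-1)$ matrices $R_{2}(X_{n}, Y_{n}, i)$ and $R_{1}(Y_{n}, i)$ reproduces exactly the substitution $A'$. Concretely, I would check entrywise that the $(b,c)$ entry of $R_{1}(Y_{n}, i)$ in \eqref{R1} equals the image of $y_{b,c}$ under $A'$, and that the $(b,c)$ entry of $R_{2}(X_{n}, Y_{n}, i)$ in \eqref{R2} equals the image of $x_{b,c}$ under $A'$, after accounting for the reindexing that collapses an $n \times n$ matrix to an $(n-1) \times (n-1)$ one. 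For $R_{1}$ this means matching the deletion of row $i$ (the shift $b \mapsto b+1$ for $b \geq i$), the preservation of rows $b < i$ (the identity branch of $A'$, valid since there $c \leq b < i$), and the merge-and-scale of columns $i, i+1$ against the factor $t\, y_{b+1,i}\, y_{b+1,i+1}$. For $R_{2}$ it means matching the column shifts $x_{b,c} \mapsto x_{b,c+1}$ and $x_{b,c} \mapsto x_{b+1,c+1}$ against the row/column reindexing, and matching the merged column $w_{k}$ against the special $c = i-1$ rule $x_{b,i-1} \mapsto y_{i,1} \ldots y_{i,b}\, x_{b,i-1}\, x_{b,i}$.

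I would finish by verifying the boundary conventions folded into the operators. The cases $i = n$ for $R_{1}$ and $i = 1$ for $R_{2}$, in which the operators simply delete the last (respectively first) row and column, correspond to the degenerate instances of $A'$ where no merge or scaling is triggered; and the placeholder zeros $w_{k} = 0$ for $k > i-1$ in \eqref{R2} land strictly below the diagonal of the output matrix, hence are disregarded, matching the fact that $A'$ never needs to define an image for a nonexistent variable $x_{b,i-1}$ with $b \geq i$. Once these entrywise checks and edge cases are confirmed, the displayed identity for $\sum_{\pi \, : \, \pi_{1} = i} \weight(\pi)$ holds as a formal polynomial identity, and summing over $i = 1, \ldots, n$ is exactly \eqref{FE1324}.
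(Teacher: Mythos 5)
Your proposal is correct and follows essentially the same route as the paper, which states that Lemma~\ref{lemma1324} directly yields \eqref{FE1324}: partition $\mathcal{S}_{n}$ by the first entry $i$, apply the lemma to each class via the bijection $\pi \mapsto \red(\pi_{2}\ldots\pi_{n})$, and observe that the substitution $A^{\prime}$ is exactly what evaluating $P_{n-1}$ at $R_{2}(X_{n},Y_{n},i)$ and $R_{1}(Y_{n},i)$ encodes. Your entrywise verification of that identification (including the boundary cases $i=1$, $i=n$ and the disregarded below-diagonal zeros) just makes explicit the bookkeeping the paper leaves implicit.
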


Although all the entries in the matrices are changed for consistency and notational convenience, we will continue to disregard the entries below the diagonal in subsequent matrices $X_{k}$ and the entries above the diagonal in subsequent matrices $Y_{k}$. We can apply the same computational tricks shown in \cite{nz:gwilf, naka:gwilf2}. For example, it is not necessary to compute $P_{n}(t; X_{n}, Y_{n})$ completely symbolically and substitute $x_{i,j} = 1$ and $y_{i,j} = 1$ at the end. Instead, we can apply functional equation (\ref{FE1324}) directly to $P_{n}(t; \omat, \omat)$ and subsequent $P_{k}$ terms. We may also use the following lemma from \cite{naka:gwilf2}, which is obvious from the definition of the operator $R_{1}$:
\begin{lemma}
	\label{sqred}
	Let $A$ be a square matrix where every row is identical (i.e., the $i$-th row and the $j$-th row are equal for every $i,j$). Then, $R_{1}(A,i)$ will also be a square matrix with identical rows.
\end{lemma}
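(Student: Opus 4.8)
The plan is to exploit the fact that the operator $R_{1}$ acts on its matrix argument in two logically independent ways: it performs a single fixed column operation that transforms each row by the same rule, and it deletes exactly one row. Neither of these manipulations can break the property that all rows are equal, so the conclusion should follow almost immediately from reading off the definition in display (\ref{R1}).

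First I would fix notation by writing $A = (a_{k,l})$ and recording the hypothesis as $a_{k,l} = a_{k',l}$ for all indices $k,k',l$; equivalently, there is a single common row vector $(c_{1}, \ldots, c_{n})$ with $a_{k,l} = c_{l}$ for every $k$. The key point to isolate here is that each entry of $A$ depends only on its column index $l$ and not on its row index $k$.

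Next I would split into the two cases appearing in the definition of $R_{1}$. When $i < n$, I would read off from display (\ref{R1}) that the row of $R_{1}(A,i)$ inherited from the original row $k$ (for $k \neq i$) consists of the entries $a_{k,1}, \ldots, a_{k,i-1}$, then the merged entry $t\,a_{k,i}\,a_{k,i+1}$, then $a_{k,i+2}, \ldots, a_{k,n}$. Substituting $a_{k,l} = c_{l}$, each ordinary entry equals $c_{l}$ and the merged entry equals $t\,c_{i}\,c_{i+1}$, all of which are manifestly independent of $k$. Hence every surviving row of $R_{1}(A,i)$ is the same vector, and since one row and one column are removed the output is $(n-1)\times(n-1)$, i.e.\ square. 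When $i = n$, the operator merely deletes the last row and last column, and truncating a family of identical rows leaves identical rows, again of size $(n-1)\times(n-1)$.

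As the excerpt itself remarks, the statement is \emph{obvious from the definition}, so I do not anticipate a genuine obstacle. The only step requiring a moment's bookkeeping is the column merge in the case $i < n$: one must check that the merged entry $t\,a_{k,i}\,a_{k,i+1}$ is row-independent just like the unmerged entries. This is immediate once every entry is rewritten in terms of the common row vector $(c_{1}, \ldots, c_{n})$, since a product of row-independent quantities is again row-independent.
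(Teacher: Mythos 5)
Your proof is correct and is simply the careful write-out of what the paper itself treats as immediate: the paper offers no separate argument beyond noting the lemma is obvious from the definition of $R_{1}$, since deleting one row and applying the same column merge (term-by-term product of columns $i$ and $i+1$, scaled by $t$) to every remaining row cannot destroy equality of rows. Your case split ($i<n$ versus $i=n$) and the rewriting of entries via a common row vector $(c_{1},\ldots,c_{n})$ match that intended reasoning exactly.
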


By Lemma~\ref{sqred}, repeated applications of the $R_{1}$ operator to the all ones matrix $\omat$ will still result in a matrix with identical rows. It is therefore sufficient to keep track of only a single row. It is also helpful to note that repeated applications of $R_{1}$ to the matrix $\omat$ will result in a matrix whose entries are powers of $t$.

While the lemma does not apply to the $R_{2}$ operator, this still allows us to simplify the polynomial $P_{n}$ and its functional equation by reducing the number of catalytic variables from $n(n+1)$ variables to $n(n+1)/2 + n$ variables.  Let $Q_{n}(t; \; C; \; d_{1}, \ldots, d_{n})$ denote the polynomial $P_{n}(t; C, D)$ where every entry of the $n \times n$ matrices $C$ and $D$ are powers of $t$ and every row in $D$ is $\left[ d_{1}, \ldots, d_{n} \right]$. We get the analogous functional equation:
\begin{gather}
	Q_{n}(t; \; C; \; d_{1}, \ldots, d_{n}) = \notag \\
	\mathop{\sum} \limits_{i=1}^{n} {c_{i,i}^{n-i} c_{i,i+1}^{n-i-1} \ldots c_{i,n-1}^{1} \cdot Q_{n-1}(t; \; R_{2}(C,D,i); \; d_{1}, \ldots, d_{i-1}, t d_{i} d_{i+1}, d_{i+2}, \ldots, d_{n})}. \tag{FE1324c} \label{FE1324c}
\end{gather}
Repeatedly applying this recurrence to $Q_{n}(t; \omat; 1 \left[n \text{ times} \right])$ allows us to compute our desired polynomial since $Q_{n}(t; \omat; 1 \left[n \text{ times} \right]) = P_{n}(t; \omat, \omat) = f_{n}(t)$. Extracting the coefficient of $t^{r}$ from this polynomial gives us $s_{n}(1 3 2 4, r)$.

Additionally, for a fixed $r$, the sequence $s_{n}(1 3 2 4,r)$ can be computed more quickly by discarding higher powers of $t$ (just as in \cite{nz:gwilf, naka:gwilf2}). Although this approach is too memory intensive for larger $r$, for small $r$, this method is still much faster than naive methods that construct the set $\mathcal{S}_{n}(\tau,r)$. The approach has been implemented in the procedure {\tt F1324rN(r,N)} in the accompanying Maple package {\tt F1324}.

For example, the Maple call {\tt F1324rN(0,19);} for the first $19$ terms of $s_{n}(1 3 2 4,0)$ produces the sequence:
\begin{gather*}
	1, 2, 6, 23, 103, 513, 2762, 15793, 94776, 591950, 3824112, 25431452, 173453058,\\
	1209639642, 8604450011, 62300851632, 458374397312, 3421888118907, 25887131596018
\end{gather*}
 and the Maple call {\tt F1324rN(1,17);} for the first $17$ terms of $s_{n}(1 3 2 4,1)$ produces the sequence:
\begin{gather*}
	0, 0, 0, 1, 10, 75, 522, 3579, 24670, 172198, 1219974, 8776255, \\
	64082132, 474605417, 3562460562, 27079243352, 208281537572 .\\
\end{gather*}

\subsection{Specializing to $r=0$}

Unfortunately, the previous algorithm developed for the pattern $1 3 2 4$ is very memory intensive, even for $r=0$. In this subsection, we outline how to extract a simpler recurrence specifically for the pattern avoidance case. 

We will specialize for the $r=0$ case beginning at functional equation (\ref{FE1324c}). Recall that $Q_{n}(t; \; C; \; d_{1}, \ldots, d_{n})$ is the polynomial given by $P_{n}(t; C, D)$ where every entry of the $n \times n$ matrices $C$ and $D$ are powers of $t$ and every row in $D$ is $\left[ d_{1}, \ldots, d_{n} \right]$. We had the functional equation
\begin{gather*}
	Q_{n}(t; \; C; \; d_{1}, \ldots, d_{n}) =\\
	\mathop{\sum} \limits_{i=1}^{n} {c_{i,i}^{n-i} c_{i,i+1}^{n-i-1} \ldots c_{i,n-1}^{1} \cdot Q_{n-1}(t; \; R_{2}(C,D,i) ; \; d_{1}, \ldots, d_{i-1}, t d_{i} d_{i+1}, d_{i+2}, \ldots, d_{n})}
\end{gather*}
and wanted to compute $Q_{n}(t; \; \omat; \; 1 \left[n \text{ times} \right]) = f_{n}(t)$ and extract the coefficient of $t^{r}$, which is exactly $s_{n}(1 3 2 4,r)$.\footnote{Recall that $\omat$ is the $n \times n$ matrix where every entry is $1$.}

Since all the variables $c_{k,l}$ (from matrix $C$) and $d_{k}$ represent powers of $t$, it is sufficient to keep track of powers of $t$ through most of the algorithm. This allows us to consider the analogous function $H_{n}(t; \; U; \; v_{1}, \ldots, v_{n})$, where $U$ is an $n \times n$ matrix of non-negative integers and each $v_{i}$ is a non-negative integer. More precisely, $H_{n}(t; \; U; \; v_{1}, \ldots, v_{n})$ is the polynomial $P_{n}(t; \; C, D)$, where $C$ and $D$ are $n \times n$ matrices, $c_{i,j} = t^{u_{i,j}}$ for every $1 \leq i,j \leq n$, and every row of $D$ is $\left[ t^{v_{1}}, \ldots, t^{v_{n}} \right]$.

In addition, we define the analogous operator $R_{2}^{\prime}$ on an $n \times n$ square matrix $U_{n}$ (of non-negative integers), a length $n$ vector of non-negative integers $\left[ v_{1}, \ldots, v_{n} \right]$, and $1 < i \leq n$: 
\begin{align}
	R_{2}^{\prime}(U_{n},\left[ v_{1}, \ldots, v_{n} \right],i) := \left[ \begin{array}{ccccccc} 
		u_{1,1} & \cdots & u_{1,i-2} & w_{1}^{\prime} & u_{1,i+1} & \cdots & u_{1,n} \\
		\vdots & \ddots & & \vdots & & & \vdots \\
		u_{i-2,1} & \cdots & u_{i-2,i-2} & w_{i-2}^{\prime} & u_{i-2,i+1} & \cdots & u_{i-2,n} \\
		u_{i-1,1} & \cdots & u_{i-1,i-2} & w_{i-1}^{\prime} & u_{i-1,i+1} & \cdots & u_{i-1,n} \\
		u_{i+1,1} & \cdots & u_{i+1,i-2} & w_{i+1}^{\prime} & u_{i+1,i+1} & \cdots & u_{i+1,n} \\
		\vdots & & \vdots & \vdots & & \ddots & \vdots\\
		u_{n,1} & \cdots & u_{n,i-2} & w_{n}^{\prime} & u_{n,i+1} & \cdots & u_{n,n} \\
		\end{array} \right] \label{R2e}
\end{align}
where 
\begin{align}
	\label{R2ew}
	w_{k}^{\prime} := \begin{cases}
		 (v_{1} + v_{2} + \ldots + v_{k}) + u_{k,i-1} + u_{k,i} & k \leq i-1\\
		0 & k > i-1 \quad .
	\end{cases}
\end{align}
If $i=1$, then $R_{2}^{\prime}(U_{n},\left[ v_{1}, \ldots, v_{n} \right],i)$ is defined to be the $(n-1) \times (n-1)$ matrix obtained by deleting the $1$-st row and $1$-st column from $U_{n}$. In essence, the $R_{2}^{\prime}$ operator modifies $U_{n}$ by deleting the $i$-th row, merging the $(i-1)$-th column with the $i$-th column (via term-by-term addition), and adding partial sums of $\left[ v_{1}, \ldots, v_{n} \right]$ into the new column.

We now have the functional equation (analogous to Eq. (\ref{FE1324c})):
\begin{gather*}
	H_{n}(t; \; U; \; v_{1}, \ldots, v_{n}) =\\
	\mathop{\sum} \limits_{i=1}^{n} {t^{e_{i}} \cdot H_{n-1}(t; \; R_{2}^{\prime}(U, \left[ v_{1}, \ldots, v_{n} \right] ,i) ; \; v_{1}, \ldots, v_{i-1}, (v_{i} + v_{i+1} + 1), v_{i+2}, \ldots, v_{n})} \tag{FE1324e} \label{FE1324e}
\end{gather*}
where $e_{i} = (n-i) u_{i,i} + (n-i-1) u_{i,i+1} + \ldots + (1) u_{i,n-1}$. Observe that $H_{n}(t; \; \zmat; \; 0 \left[n \text{ times} \right])$ is now our desired polynomial $f_{n}(t)$.\footnote{We denote the $n \times n$ matrix consisting of all zeros by $\zmat$.}

Since we are specifically considering the $r=0$ case, we can make additional observations and simplifications. First, we are only interested in the constant term of $f_{n}(t)$. As in \cite{nz:gwilf, naka:gwilf2}, we only need to keep track of polynomials of the form $a_{0} + a_{1} t$ in intermediate computations, where $a_{1}$ represents permutations with \emph{at least one} occurrence of the pattern we are tracking. Because of this, we may consider all matrices/vectors used in $H_{n}$ to be $0$-$1$ matrices/vectors. After every addition (for example, in the $w_{k}^{\prime}$ term in $R_{2}^{\prime}$), we can take the minimum of the resulting sum and $1$.

Next, observe that $v_{1}, \ldots, v_{n}$ only appear in the $R_{2}^{\prime}$ operator, and in particular, in the partial sums for $w_{k}^{\prime}$ in Eq.~\ref{R2ew}. Suppose that some of the $v_{1}, \ldots, v_{n}$ are equal to $1$, and let $j$ be the smallest number such that $v_{j} = 1$. Then, 
\begin{align*}
	H_{n}(t; \; U; \; v_{1}, \ldots, v_{n}) |_{t=0} = H_{n}(t; \; U; \; 0 \left[j-1 \text{ times} \right],  1 \left[n-j+1 \text{ times} \right] ) |_{t=0}  .
\end{align*}
In particular, the variables $v_{1}, \ldots, v_{n}$ are unnecessary, and it is sufficient to keep track of how many $0$'s there are. We can then consider this slightly simpler function
\begin{align*}
	\widetilde{H}_{n}(t; \; U; \; k) := H_{n}(t; \; U; \; 0 \left[k \text{ times} \right],  1 \left[n-k \text{ times} \right] ) 
\end{align*}
where $0 \leq k \leq n$.

Finally, observe that whenever $e_{i} > 0$ in (\ref{FE1324e}), we can discard the entire term since we are only interested in the constant term of the final polynomial. Observe that $e_{i} > 0$ if and only if $u_{i,j} > 0$ for some $j \geq i$. This observation (combined with how the $R_{2}^{\prime}$ operator ``modifies'' the matrix $U_{n}$) implies that we only need to keep track of the left-most $1$ within each row of $U_{n}$. If there are multiple $1$'s on a row, the left-most $1$ is sufficient to force $e_{i} > 0$ as long as it is not in the $n$-th column. Therefore, we can consider a function of the form 
\begin{align*}
	H_{n}^{0}(t; \; b_{1}, \ldots, b_{n}; \; k)  := \widetilde{H}_{n}(t; \; B_{n}; \; k) = H_{n}(t; \; B_{n}; \; 0 \left[k \text{ times} \right],  1 \left[n-k \text{ times} \right] ) 
\end{align*}
where $0 \leq k \leq n$ and $1 \leq b_{j} \leq n+1$ for each $j$ and $B_{n}$ is the $n \times n$ matrix where the $j$-th row is $\left[ \; 0 \left[n \text{ times} \right] \; \right]$ if $b_{j} = n+1$ and otherwise is $\left[ \; 0 \left[b_{j}-1 \text{ times} \right], \; 1 \left[n-b_{j}+1 \text{ times} \right] \; \right]$.

This approach is implemented in the Maple package {\tt F1324} in the procedure {\tt AV1324(n)}. An improved implementation in C++ is provided in the program {\tt av1324.cpp} and is discussed in greater detail in the next section.\\

\section{Computational details and results}\label{compres}

\subsection{Algorithmic details for enumerating $1324$-avoiders}

For notational convenience, let $a_{n} \equiv s_{n}(1 3 2 4,0)$ denote the number of 1324-avoiding permutations of length $n$. The values $a_{n}$ up to $n = 25$ were previously computed in \cite{aerwz} and subsequently listed in \href{http://oeis.org/A061552}{A061552} of OEIS (\cite{OEIS}). To extend this list, we have written a C++ implementation of the algorithm described in the previous section. The implementation is in the program {\tt av1324.cpp}, which is available from the authors' websites.

The main part of the program is a recursive function $G(n,k,b) \equiv H_{n}^{0}(t; \; b_{1}, \ldots, b_{n}; \; k)$ which takes as input an integer $n \leq 1$, an integer $0 \leq k \leq n$, and a vector of integers $b = [\tilde b_{0}, \ldots \tilde b_{n-1}]$ satisfying $0 \leq \tilde b_{j} \leq n$ (these correspond to the $b_{j}$ in $H_{n}^{0}$ as $\tilde b_{j} = b_{j+1} - 1$, being zero-aligned since this is more natural in C++). We represent $b$ as an array of bytes, zero-padded to a fixed maximum length of 32 (sufficient to compute $a_{n}$ up to $n = 32$). The output of $G$ is an integer, which we represent as a 128-bit unsigned integer (sufficient to compute $a_{n}$ up to at least $n = 34$, since $a_{34} < 34! < 2^{128}$ and all recursive calls to $G$ must produce values that are no larger than the output).

We use full memorization to reduce the number of recursive calls that have to be made. The {\tt std::map} type in the C++ standard library is used to associate vectors $b$ with output values $G(n,k,b)$, using one such map for each pair $n, k$. The {\tt std::map} type implements a self-balancing binary tree with $O(\log N)$ insertion and lookup time where $N$ is the size of the cache. The keys $b$ are compared lexicographically by casting to 64-bit integers and processing eight bytes at a time.

We compiled the program with GCC 4.3.4 and ran it on the MACH computer at the Johannes Kepler University of Linz, using a 2.66 GHz Intel Xeon E7-8837 CPU with 1024 GiB of memory allocated to the process. The memory limit allowed computing $a_n$ up to $n = 31$. The $6$ new terms are:
\begin{align*}
	a_{26} &= 49339914891701589053\\
	a_{27} &= 402890652358573525928\\
	a_{28} &= 3313004165660965754922\\
	a_{29} &= 27424185239545986820514\\
	a_{30} &= 228437994561962363104048\\
	a_{31} &= 1914189093351633702834757
\end{align*}
We computed all $a_n$ consecutively in one run to benefit from already cached function values (computing a single $a_n$ in isolation would not give any significant memory savings). With $a_{1}, \ldots, a_{30}$ already computed, the evaluation of $a_{31}$ took 33 hours and used 920 GiB of memory, and the computation as a whole took 50 hours.

Detailed results from the computation are presented in Table~\ref{tab:compresults} (in the Appendix). Besides the running time and total memory usage, we record the number of cache hits (calls to $H$ replaced by cache lookups) and the number of cache misses (calls to $G$ that require evaluation). Up to $n = 15$, we also show the number of calls to $G$ used by the algorithm with caching disabled, measured in a separate run of the program.

All quantities appear to grow slightly faster than exponentially. Over the measured range, the number of function calls (as well as the size of the cache in memory) is roughly proportional to $2.2^{n}$. If the memory overhead of the implementation were reduced by half (or if we had a computer with twice as much memory), we could thus compute roughly one more entry. With caching disabled, the number of recursive function calls grows much faster, making this version impractical (for $n \leq 15$, the number of calls is roughly $2.4 a_{n}$).

At present, we do not know if the algorithm can be modified to significantly reduce the memory consumption required by full memorization, without significantly increasing the running time. Such a modification might allow computing several more entries in the sequence $\{a_{n}\}$, particularly if the job could be parallelized.

\subsection{Observations on the asymptotics}

Since the enumeration problem is solved for the patterns $1 2 3 4$ and $1 3 4 2$, it is not hard to derive asymptotic information on the sequences enumerating their respective pattern avoiders. For the pattern $1 2 3 4$, we have
\begin{align*}
	s_{n}(1 2 3 4, 0) \sim 9^{n} n^{-4}
\end{align*}
and for the pattern $1 3 4 2$, we have
\begin{align*}
	s_{n}(1 3 4 2, 0) \sim 8^{n} n^{-5/2}.
\end{align*}
The convergence occurs fairly quickly, even when observing the first $31$ terms. On the other hand, it is not even known if the asymptotics for $s_{n}(1 3 2 4, 0)$ look like $\mu^{n} n^{\theta}$ (for constants $\mu$ and $\theta$). Shalosh B.~Ekhad was kind enough to compute some numerical data for the authors.

Using the first $29$ terms, the first $30$ terms, and the first $31$ terms, the approximate value for $\theta$ and $\mu$ for the patterns $1234$ and $1342$ are:
\begin{table}[h]
	\parbox{.45\linewidth}{
	\centering
	\begin{tabular}{|c|c|c|}
		\hline
		n & $\theta$ & $\mu$\\
		\hline
		29 & -3.990065278 & 8.978066441\\
		\hline
		30 & -3.990767318 & 8.979528508\\
		\hline
		31 &  -3.991374852 & 8.980845382\\
		\hline
	\end{tabular}
	\caption{Approximate values for $\theta$ and $\mu$ using the first $n=29, 30, 31$ terms of $s_{n}(1 2 3 4,0)$.}
	}
	\hfill
	\parbox{.45\linewidth}{
	\centering
		\begin{tabular}{|c|c|c|}
		\hline
		n & $\theta$ & $\mu$\\
		\hline
		29 & -2.507234370 & 7.987629199\\
		\hline
		30 & -2.506672206 & 7.988446482\\
		\hline
		31 & -2.506140202 & 7.989181549\\
		\hline
	\end{tabular}
	\caption{Approximate values for $\theta$ and $\mu$ using the first $n=29, 30, 31$ terms of $s_{n}(1 3 4 2,0)$.}
	}
\end{table}

However, when the same guessing is done for the pattern $1 3 2 4$, the convergence is much slower. The values are:
\begin{table}[h]
	\centering
	\begin{tabular}{|c|c|c|}
		\hline
		n & $\theta$ & $\mu$\\
		\hline
		29 & -8.365614110 & 10.40595402\\
		\hline
		30 & -8.506078382 & 10.42830233\\
		\hline
		31 & -8.643316748 & 10.44936383\\
		\hline
	\end{tabular}
	\caption{``Approximate'' values for $\theta$ and $\mu$ using the first $n=29, 30, 31$ terms of $s_{n}(1 3 2 4,0)$.}
\end{table}

It should be re-emphasized that it is not known whether this sequence fits the asymptotic form of $\mu^{n} n^{\theta}$. The empirical asymptotics suggests that either the convergence is much slower than the other length $4$ patterns or that the sequence does not have that asymptotic form to begin with (unlike the other patterns). In addition, the $\mu$ values would suggest that the Stanley-Wilf limit $L(1 3 2 4)$ is at least $10.45$. More detailed numerical data on the asymptotics (from Shalosh B.~Ekhad) can be found on the authors' websites.\\

\section{Extending to inversions}\label{permstats}

In this section, we show how the previous functional equations can be adapted to refine the values by the number of inversions. The number of inversions in a permutation is one of the most commonly studied permutation statistic and in essence, quantifies how ``unsorted'' a permutation is. Given a permutation $\pi = \pi_{1} \ldots \pi_{n}$, the \emph{inversion number} of $\pi$, denoted by $\inv(\pi)$, is the number of pairs $(i,j)$ such that $1 \leq i < j \leq n$ and $\pi_{i} > \pi_{j}$. An equivalent definition is that $\inv(\pi) = N_{2 1}(\pi)$, the number of $2 1$ patterns in $\pi$.

We again consider the pattern $1 3 2 4$. For each $n$, we define the bivariate polynomial
\begin{align*}
	g_{n}(t,q) := \mathop{\sum} \limits_{\pi \in \mathcal{S}_{n}} {q^{\inv(\pi)} t^{N_{1 3 2 4}(\pi)}}.
\end{align*}
Observe that $g_{n}(t,1)$ is exactly $f_{n}(t)$ from Section~\ref{pat1324}.

For the pattern $\tau = 1 3 2 4$, the polynomial $P_{n}$ can now be ``generalized'' as
\begin{align*}
	P_{n}(t,q; \; X_{n}, Y_{n}) := \mathop{\sum} \limits_{\pi \in \mathcal{S}_{n}} {q^{\inv(\pi)} \weight(\pi)}.
\end{align*}
Note that $P_{n}(t,1; \; X_{n}, Y_{n})$ is exactly $P_{n}(t; \; X_{n}, Y_{n})$ from Section~\ref{pat1324}.

We now make an important observation. Given a permutation $\pi = \pi_{1} \ldots \pi_{n}$, suppose that $\pi_{1} = i$. Then, $\inv(\pi)$ is equal to the number of inversions in $\pi_{2} \ldots \pi_{n}$ \textbf{plus} the number of terms in $\pi_{2} \ldots \pi_{n}$ that are less than $i$ (which is exactly $i-1$). For any previously define functional equation, it is enough to insert a factor of $q^{i-1}$ into the summation.

We can now quickly derive the modified functional equations for the pattern. The functional equation (\ref{FE1324}) now becomes:
\begin{corollary}
	For the pattern $\tau = 1 3 2 4$, 
	\begin{align}
		P_{n}(t,q; X_{n},Y_{n}) = \mathop{\sum} \limits_{i=1}^{n} {q^{i-1} x_{i,i}^{n-i} x_{i,i+1}^{n-i-1} \ldots x_{i,n-1}^{1} \cdot P_{n-1}(t,q; \; R_{2}(X_{n},Y_{n},i), \; R_{1}(Y_{n},i))}. \tag{qFE1324} \label{qFE1324}
	\end{align}
\end{corollary}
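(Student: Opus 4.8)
The plan is to rerun the argument behind the unrefined functional equation (\ref{FE1324}), inserting only the extra factor of $q^{i-1}$ flagged in the observation preceding the statement. I would begin from the definition
\begin{align*}
	P_{n}(t,q; X_{n},Y_{n}) = \sum_{\pi \in \mathcal{S}_{n}} q^{\inv(\pi)} \weight(\pi)
\end{align*}
and split the sum according to the first letter $\pi_{1} = i$, setting $\pi^{\prime} := \red(\pi_{2} \ldots \pi_{n})$ exactly as in Lemma~\ref{lemma1324}. For each fixed $i$ the map $\pi \mapsto \pi^{\prime}$ is a bijection from $\{\pi \in \mathcal{S}_{n} : \pi_{1} = i\}$ onto $\mathcal{S}_{n-1}$, so after the split the inner summation runs over all of $\mathcal{S}_{n-1}$.

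The one genuinely new ingredient is the inversion decomposition. Because $\red$ is order-isomorphic, it preserves $21$ patterns, so $\inv(\pi^{\prime})$ equals the number of inversions among $\pi_{2} \ldots \pi_{n}$; meanwhile the leading letter $i$ forms an inversion with precisely the $i-1$ smaller values occurring after it. Hence $\inv(\pi) = \inv(\pi^{\prime}) + (i-1)$, giving $q^{\inv(\pi)} = q^{i-1} q^{\inv(\pi^{\prime})}$. Since $q^{i-1}$ depends only on $i$, it factors out of the inner sum.

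For the weight I would invoke Lemma~\ref{lemma1324} verbatim, rewriting $\weight(\pi)$ as $x_{i,i}^{n-i} x_{i,i+1}^{n-i-1} \ldots x_{i,n-1}^{1}$ times $\weight(\pi^{\prime})$ under the substitution set $A^{\prime}$; the derivation of (\ref{FE1324}) has already matched that substitution with applying $R_{2}(X_{n},Y_{n},i)$ and $R_{1}(Y_{n},i)$. As $q$ tracks inversions while the catalytic variables track the $1324$, $213$, and $12$ statistics, the two bookkeeping mechanisms do not interfere, and the inner sum $\sum_{\pi^{\prime} \in \mathcal{S}_{n-1}} q^{\inv(\pi^{\prime})} \weight(\pi^{\prime})|_{A^{\prime}}$ is by definition $P_{n-1}(t,q; R_{2}(X_{n},Y_{n},i), R_{1}(Y_{n},i))$. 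Reinstating the factored-out $q^{i-1}$ together with the $x$-monomial and summing over $i$ yields (\ref{qFE1324}). There is no real obstacle here beyond the one already settled: the only content is the decomposition $\inv(\pi) = \inv(\pi^{\prime}) + (i-1)$, and the single point deserving a moment's care is that $\inv$ is computed on the reduced word $\pi^{\prime}$ and still equals the inversion count of $\pi_{2} \ldots \pi_{n}$---which holds precisely because reduction preserves relative order.
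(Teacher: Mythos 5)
Your proposal is correct and follows essentially the same route as the paper: the paper also derives (\ref{qFE1324}) by rerunning the derivation of (\ref{FE1324}) and inserting the factor $q^{i-1}$, justified by the observation that $\inv(\pi) = \inv(\pi_{2} \ldots \pi_{n}) + (i-1)$ when $\pi_{1} = i$, which matches your decomposition $\inv(\pi) = \inv(\pi^{\prime}) + (i-1)$.
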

Similarly, the functional equation (\ref{FE1324c}) for $Q_{n}(t; \; C; \; d_{1}, \ldots, d_{n})$ now becomes the analogous:
\begin{corollary}
	For the pattern $\tau = 1 3 2 4$, 
	\begin{gather}
		Q_{n}(t,q ; \; C; \; d_{1}, \ldots, d_{n}) = \notag \\
		\mathop{\sum} \limits_{i=1}^{n} {q^{i-1} c_{i,i}^{n-i} c_{i,i+1}^{n-i-1} \ldots c_{i,n-1}^{1} \cdot Q_{n-1}(t,q; \; R_{2}(C,D,i); \; d_{1}, \ldots, d_{i-1}, t d_{i} d_{i+1}, d_{i+2}, \ldots, d_{n})}. \tag{qFE1324c} \label{qFE1324c}
	\end{gather}
\end{corollary}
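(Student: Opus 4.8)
The plan is to deduce (\ref{qFE1324c}) from the corollary (\ref{qFE1324}) by exactly the same specialization that produced (\ref{FE1324c}) from (\ref{FE1324}), carrying the new factor $q^{i-1}$ along unchanged. The crucial point is that the operators $R_{1}$ and $R_{2}$, together with Lemma~\ref{sqred}, were all defined without any reference to $q$; introducing the inversion statistic therefore only multiplies the $i$-th summand by a scalar and leaves the catalytic bookkeeping intact.

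Concretely, I would first recall the specialization itself. By definition $Q_{n}(t,q;\,C;\,d_{1},\ldots,d_{n})$ is $P_{n}(t,q;\,C,D)$ in the case where every entry of $C$ and $D$ is a power of $t$ and every row of $D$ equals $[d_{1},\ldots,d_{n}]$. Applying (\ref{qFE1324}) to this special pair $(C,D)$ and writing $x_{i,j}=c_{i,j}$, the $i$-th summand on the right is
\[
q^{i-1}\, c_{i,i}^{\,n-i} c_{i,i+1}^{\,n-i-1}\cdots c_{i,n-1}^{\,1}\cdot P_{n-1}(t,q;\,R_{2}(C,D,i),\,R_{1}(D,i)).
\]
By Lemma~\ref{sqred}, $R_{1}(D,i)$ again has identical rows, and reading its common row off the definition (\ref{R1}) of $R_{1}$ shows it to be $[d_{1},\ldots,d_{i-1},\,t\,d_{i}d_{i+1},\,d_{i+2},\ldots,d_{n}]$; since the $d_{j}$ are powers of $t$, so are these entries, and likewise all entries of $R_{2}(C,D,i)$ are powers of $t$. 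Hence $P_{n-1}(t,q;\,R_{2}(C,D,i),R_{1}(D,i))$ is exactly $Q_{n-1}(t,q;\,R_{2}(C,D,i);\,d_{1},\ldots,d_{i-1},t\,d_{i}d_{i+1},d_{i+2},\ldots,d_{n})$, and summing over $i$ yields (\ref{qFE1324c}).

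To keep the argument self-contained, one could instead re-derive (\ref{qFE1324}) from scratch before specializing. Here the only genuinely new ingredient is the observation recorded above: if $\pi_{1}=i$ and $\pi'=\red(\pi_{2}\cdots\pi_{n})$, then $\inv(\pi)=\inv(\pi')+(i-1)$, because the reduction preserves all inversions internal to $\pi_{2}\cdots\pi_{n}$ and the leading entry $i$ forms one inversion with each of the $i-1$ smaller entries that follow it. Thus $q^{\inv(\pi)}=q^{i-1}q^{\inv(\pi')}$. Grouping the sum defining $P_{n}(t,q;X_{n},Y_{n})$ by the value $i=\pi_{1}$, factoring out $q^{i-1}$, and applying Lemma~\ref{lemma1324} to $\weight(\pi)$ term by term reproduces (\ref{qFE1324}); the specialization of the previous paragraph then gives (\ref{qFE1324c}).

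I expect no serious obstacle, since all of the combinatorial substance---Lemma~\ref{lemma1324}, Lemma~\ref{sqred}, and the additive decomposition of $\inv$---is already in place. The one point deserving care is precisely that this decomposition $\inv(\pi)=\inv(\pi')+(i-1)$ is additive and entirely independent of the weight bookkeeping, so that the factor $q^{i-1}$ acts as a clean scalar on each summand and commutes with the passage to the reduced matrices $R_{1}(D,i)$ and $R_{2}(C,D,i)$. Verifying that $q$ never enters the definitions of $R_{1}$, $R_{2}$, or Lemma~\ref{sqred} is immediate, which is exactly what makes the $q$-refinement essentially automatic.
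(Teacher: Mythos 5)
Your proposal is correct and follows the paper's own route: the paper derives (\ref{qFE1324c}) exactly by observing that $\inv(\pi)=\inv(\red(\pi_2\cdots\pi_n))+(i-1)$ when $\pi_1=i$, so a factor $q^{i-1}$ is inserted into each summand while the operators $R_1$, $R_2$ and the specialization to matrices whose entries are powers of $t$ (with identical rows in $D$, via Lemma~\ref{sqred}) are untouched. Your additional check that $q$ never enters $R_1$, $R_2$, or the row structure is precisely the implicit content of the paper's remark.
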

\noindent The functional equation (\ref{FE1324e}) for $H_{n}(t; \; U; \; v_{1}, \ldots, v_{n})$, which corresponds specifically to the pattern-avoidance case, also becomes the analogous:
\begin{corollary}
	For the pattern $\tau = 1 3 2 4$, 
	\begin{gather*}
		H_{n}(t,q; \; U; \; v_{1}, \ldots, v_{n}) =\\
		\mathop{\sum} \limits_{i=1}^{n} {q^{i-1} t^{e_{i}} \cdot H_{n-1}(t,q; \; R_{2}^{\prime}(U, \left[ v_{1}, \ldots, v_{n} \right] ,i) ; \; v_{1}, \ldots, v_{i-1}, (v_{i} + v_{i+1} + 1), v_{i+2}, \ldots, v_{n})} \tag{qFE1324e} \label{qFE1324e}
	\end{gather*}
\end{corollary}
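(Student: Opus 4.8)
The plan is to piggyback on the already-established functional equation (\ref{FE1324e}), since the only new ingredient is the inversion bookkeeping, which factors cleanly through the first-entry decomposition already used there.

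First I would record the inversion-splitting observation stated just above the corollary: if $\pi \in \mathcal{S}_n$ has $\pi_1 = i$ and $\pi' := \red(\pi_2 \ldots \pi_n) \in \mathcal{S}_{n-1}$, then $\inv(\pi) = \inv(\pi') + (i-1)$. This is immediate, but it is the one place where an error could hide, so I would verify it carefully: deleting the leading entry $i$ destroys exactly the inversions $(1, j)$ with $\pi_j < i$, and since the values strictly below $i$ are precisely $1, \ldots, i-1$ and all lie to the right of position $1$, there are exactly $i-1$ of them; reduction then preserves the relative order — hence the inversions — among the surviving entries.

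Next I would revisit the derivation of (\ref{FE1324e}), which rests on Lemma~\ref{lemma1324} and on summing over $\pi \in \mathcal{S}_n$ grouped by the value $i$ of the first entry, the map $\pi \mapsto (i, \pi')$ being a bijection $\mathcal{S}_n \to \{1, \ldots, n\} \times \mathcal{S}_{n-1}$. To obtain the $q$-refinement I simply insert $q^{\inv(\pi)}$ into the defining sum $P_n(t,q; X_n, Y_n) = \sum_\pi q^{\inv(\pi)} \weight(\pi)$ and substitute $q^{\inv(\pi)} = q^{i-1} q^{\inv(\pi')}$. For each fixed $i$ the factor $q^{i-1}$ is constant across the inner sum over $\pi'$ and pulls out front, while $q^{\inv(\pi')}$ recombines with $\weight(\pi')$ to rebuild $P_{n-1}(t,q; \ldots)$; this produces (\ref{qFE1324}). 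Passing to the specializations $Q_n$ and then $H_n$ — replacing the powers-of-$t$ matrix $C$ and vector $d$ by their exponent data $U$ and $v$, and $R_2$ by $R_2'$ — touches nothing involving $q$, so the $q^{i-1}$ prefactor survives each summand unchanged and yields the stated equation (\ref{qFE1324e}).

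The reassuring point is that there is essentially no obstacle here: the inversion statistic decomposes additively under the first-entry recursion in a manner completely orthogonal to the $1324$- and auxiliary-pattern data encoded in $\weight$, so the whole argument of Section~\ref{pat1324} carries over with the single cosmetic change of threading a $q^{i-1}$ through each summand. I would expect the write-up to be correspondingly short, the only genuine content being the additive count $i-1$ verified in the first step.
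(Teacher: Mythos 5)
Your proposal is correct and matches the paper's own argument: the paper makes the same observation that $\inv(\pi) = \inv(\pi') + (i-1)$ when $\pi_1 = i$, and concludes that inserting a factor of $q^{i-1}$ into each summand of the previously derived functional equations (including (\ref{FE1324e})) yields the $q$-refined versions. Your additional care in verifying the additive count $i-1$ and in noting that the specializations to $Q_n$ and $H_n$ do not interact with $q$ is a faithful, slightly more detailed rendering of the same reasoning.
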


The enumeration algorithm derived from the functional equation (\ref{qFE1324e}) has been implemented in the procedure {\tt qAv1324r(n,r,q)} in the Maple package {\tt F1324}. It is also worth noting that the same extension can be done for tracking non-inversions by inserting $q^{n-i}$ into the summations (instead of $q^{i-1}$). This has also been implemented in the Maple package {\tt F1324} in the procedure {\tt pAv1324r(n,r,p)}.

In \cite{cjs:ub1324}, Claesson, Jel{\'{\i}}nek, and Steingr{\'{\i}}msson consider refining the number of $1324$-avoiding permutations by the inversion number to prove a new upper bound on the Stanley-Wilf limit. They conjectured that for each fixed $k$, the number of $1324$-avoiders with exactly $k$ inversions is non-decreasing in $n$. If this conjecture is proven true, their result would improve the upper bound of the growth rate. Using our algorithm, we are able to empirically confirm this conjecture for $n,k \leq 23$, and the explicit values can be found on the authors' websites. We suspect that a more careful analysis of the functional equations and the algorithm may lead to new insights regarding this conjecture.\\

\section{Conclusion}\label{concl}

In this paper, we extended the functional equations approach of \cite{nz:gwilf, naka:gwilf2} to derive functional equations as well as an enumeration algorithm for computing $s_{n}(1 3 2 4,r)$. We were able to specialize this approach to $r=0$ to develop a new enumeration algorithm for computing the number of $1 3 2 4$-avoiding permutations. This new approach was used to compute $6$ new terms for the sequence as well as make some empirical observations on the asymptotics. The functional equations were also extended to refine $s_{n}(1 3 2 4, r)$ by the number of inversions.

While some of the key contributions of this paper are algorithms, it is important to note that all the intermediate steps were rigorous functional equations. In particular, the specialized functional equations for enumerating $1 3 2 4$-avoiders can be viewed as recursively defined functions on $0$-$1$ matrices. We suspect that a more careful analysis of these functions may yield insight into the sequence enumerating the $1 3 2 4$-avoiders and perhaps the Stanley-Wilf limit $L(1 3 2 4)$. In addition, the functional equations for tracking inversions were used to confirm the conjecture by Claesson, Jel{\'{\i}}nek, and Steingr{\'{\i}}msson for up to $n,k \leq 23$. We suspect that a more careful analysis of these functional equations may also provide insight toward resolving their conjecture, which would then lower the upper bound on $L(1 3 2 4)$ to $e^{\pi \sqrt{2/3}} \approx 13.002$.\\

\noindent \textbf{Acknowledgments}: We would like to thank Doron Zeilberger and Manuel Kauers for their very helpful comments and suggestions. We would also like to thank Shalosh B.~Ekhad for computing numerical data on the asymptotics for us.

\bibliography{FE1324b}{}

\begin{thebibliography}{10}

\bibitem{aerwz}
M.~H. Albert, M.~Elder, A.~Rechnitzer, P.~Westcott, and M.~Zabrocki.
\newblock On the {S}tanley-{W}ilf limit of 4231-avoiding permutations and a
  conjecture of {A}rratia.
\newblock {\em Adv. in Appl. Math.}, 36(2):96--105, 2006.

\bibitem{arratia}
Richard Arratia.
\newblock On the {S}tanley-{W}ilf conjecture for the number of permutations
  avoiding a given pattern.
\newblock {\em Electron. J. Combin.}, 6:Note, N1, 4 pp. (electronic), 1999.

\bibitem{bona:ub1324}
Mikl{\'o}s B{\'o}na.
\newblock A new upper bound for {$1324$}-avoiding permutations. arxiv:1207.2379
  [math.co], 2012.

\bibitem{bona:wc1342}
Mikl{\'o}s B{\'o}na.
\newblock Exact enumeration of {$1342$}-avoiding permutations: a close link
  with labeled trees and planar maps.
\newblock {\em J. Combin. Theory Ser. A}, 80(2):257--272, 1997.

\bibitem{bona:prec132}
Mikl{\'o}s B{\'o}na.
\newblock The number of permutations with exactly {$r$} {$132$}-subsequences is
  {$P$}-recursive in the size!
\newblock {\em Adv. in Appl. Math.}, 18(4):510--522, 1997.

\bibitem{bona:f132}
Mikl{\'o}s B{\'o}na.
\newblock Permutations with one or two {$132$}-subsequences.
\newblock {\em Discrete Math.}, 181(1-3):267--274, 1998.

\bibitem{bur:f321}
Alexander Burstein.
\newblock A short proof for the number of permutations containing pattern 321
  exactly once.
\newblock {\em Electron. J. Combin.}, 18(2):Paper 21, 3, 2011.

\bibitem{callan}
David Callan.
\newblock A recursive bijective approach to counting permutations containing
  $3$-letter patterns. arxiv:math/0211380 [math.co], 2002.

\bibitem{cjs:ub1324}
Anders Claesson, V{\'{\i}}t Jel{\'{\i}}nek, and Einar Steingr{\'{\i}}msson.
\newblock Upper bounds for the {S}tanley-{W}ilf limit of 1324 and other layered
  patterns.
\newblock {\em J. Combin. Theory Ser. A}, 119(8):1680--1691, 2012.

\bibitem{eldvat}
Murray Elder and Vince Vatter.
\newblock Problems and conjectures presented at the third international
  conference on permutation patterns. arxiv:math/0505504 [math.co], 2005.

\bibitem{fulmek}
Markus Fulmek.
\newblock Enumeration of permutations containing a prescribed number of
  occurrences of a pattern of length three.
\newblock {\em Adv. in Appl. Math.}, 30(4):607--632, 2003.

\bibitem{gessel}
Ira Gessel.
\newblock Symmetric functions and {P}-recursiveness.
\newblock {\em J. Combin. Theory Ser. A}, 53(2):257--285, 1990.

\bibitem{knuth}
Donald~E. Knuth.
\newblock {\em The art of computer programming}.
\newblock Vol. 1: Fundamental algorithms. Addison Wesley, Reading,
  Massachusetts, 1973.

\bibitem{manvain}
Toufik Mansour and Alek Vainshtein.
\newblock Counting occurrences of 132 in a permutation.
\newblock {\em Adv. in Appl. Math.}, 28(2):185--195, 2002.

\bibitem{martar}
Adam Marcus and G{\'a}bor Tardos.
\newblock Excluded permutation matrices and the {S}tanley-{W}ilf conjecture.
\newblock {\em J. Combin. Theory Ser. A}, 107(1):153--160, 2004.

\bibitem{marrad}
Darko Marinov and Rado{\v{s}} Radoi{\v{c}}i{\'c}.
\newblock Counting 1324-avoiding permutations.
\newblock {\em Electron. J. Combin.}, 9(2):Research paper 13, 9 pp.
  (electronic), 2002/03.
\newblock Permutation patterns (Otago, 2003).

\bibitem{naka:gwilf2}
Brian Nakamura.
\newblock Approaches for enumerating permutations with a prescribed number of
  occurrences of patterns.
\newblock {\em Pure Math. Appl. (PU.M.A.), to appear}.

\bibitem{nz:gwilf}
Brian Nakamura and Doron Zeilberger.
\newblock Using {N}oonan-{Z}eilberger functional equations to enumerate (in
  polynomial time!) generalized {W}ilf classes.
\newblock {\em Adv. in Appl. Math.}, 50(3):356--366, 2013.

\bibitem{noonan}
John Noonan.
\newblock The number of permutations containing exactly one increasing
  subsequence of length three.
\newblock {\em Discrete Math.}, 152(1-3):307--313, 1996.

\bibitem{noonzeil:forbid}
John Noonan and Doron Zeilberger.
\newblock The enumeration of permutations with a prescribed number of
  ``forbidden'' patterns.
\newblock {\em Adv. in Appl. Math.}, 17(4):381--407, 1996.

\bibitem{regev}
Amitai Regev.
\newblock Asymptotic values for degrees associated with strips of {Y}oung
  diagrams.
\newblock {\em Adv. in Math.}, 41(2):115--136, 1981.

\bibitem{OEIS}
Neil Sloane.
\newblock The {O}n-{L}ine {E}ncyclopedia of {I}nteger
  {S}equences,\url{http://oeis.org/}, 2013.

\bibitem{stein:survey}
Einar Steingr{\'{\i}}msson.
\newblock Some open problems on permutation patterns.
\newblock {\em London Mathematical Society Lecture Note Series,to appear}.

\end{thebibliography}
\bibliographystyle{plain}

\clearpage

\appendix
\section*{Appendix}

\begin{table}[h]
\centering
\begin{small}
\begin{tabular}{ l l l l c c}
\hline
$n$ & $a_n = s_n(1 3 2 4,0)$ & Cache hits & Cache misses & \multicolumn{2}{l}{Calls, cache disabled} \\
\hline
$1$  &  $1$  &  $0$  &  $1$  &  \multicolumn{2}{l}{1}    \\
$2$  &  $2$  &  $0$  &  $4$  &  \multicolumn{2}{l}{4}    \\
$3$  &  $6$  &  $4$  &  $10$  &  \multicolumn{2}{l}{14}    \\
$4$  &  $23$  &  $17$  &  $21$  &  \multicolumn{2}{l}{54}    \\
$5$  &  $103$  &  $49$  &  $41$  &  \multicolumn{2}{l}{239}    \\
$6$  &  $513$  &  $121$  &  $79$  &  \multicolumn{2}{l}{1187}    \\
$7$  &  $2762$  &  $280$  &  $153$  &  \multicolumn{2}{l}{6417}    \\
$8$  &  $15793$  &  $628$  &  $300$  &  \multicolumn{2}{l}{36936}    \\
$9$  &  $94776$  &  $1386$  &  $595$  &  \multicolumn{2}{l}{223190}    \\
$10$  &  $591950$  &  $3032$  &  $1194$  &  \multicolumn{2}{l}{1402845}    \\
$11$  &  $3824112$  &  $6607$  &  $2422$  &  \multicolumn{2}{l}{9113389}    \\
$12$  &  $25431452$  &  $14383$  &  $4963$  &  \multicolumn{2}{l}{60903142}    \\
$13$  &  $173453058$  &  $31328$  &  $10260$  &  \multicolumn{2}{l}{417167046}    \\
$14$  &  $1209639642$  &  $68314$  &  $21375$  &  \multicolumn{2}{l}{2920322177}    \\
$15$  &  $8604450011$  &  $149166$  &  $44828$  &   \multicolumn{2}{l}{20843563430}   \\
$16$  &  $62300851632$  &  $326163$  &  $94562$  &   & \\ \cline{5-6}
$17$  &  $458374397312$  &  $714178$  &  $200491$  &  Time, ms  &  Mem, KiB  \\ \cline{5-6}
$18$  &  $3421888118907$  &  $1565935$  &  $427006$  &  $3.4\cdot 10^{2}$  &  $5.2\cdot 10^{4}$  \\
$19$  &  $25887131596018$  &  $3438097$  &  $913101$  &  $9.1\cdot 10^{2}$  &  $9.7\cdot 10^{4}$  \\
$20$  &  $198244731603623$  &  $7558183$  &  $1959618$  &  $2.5\cdot 10^{3}$  &  $2.0\cdot 10^{5}$  \\
$21$  &  $1535346218316422$  &  $16636000$  &  $4219286$  &  $6.4\cdot 10^{3}$  &  $4.1\cdot 10^{5}$  \\
$22$  &  $12015325816028313$  &  $36659838$  &  $9111542$  &  $1.6\cdot 10^{4}$  &  $8.7\cdot 10^{5}$  \\
$23$  &  $94944352095728825$  &  $80876277$  &  $19729578$  &  $4.1\cdot 10^{4}$  &  $1.9\cdot 10^{6}$  \\
$24$  &  $757046484552152932$  &  $178616038$  &  $42827166$  &  $1.0\cdot 10^{5}$  &  $4.0\cdot 10^{6}$  \\
$25$  &  $6087537591051072864$  &  $394883523$  &  $93177487$  &  $2.6\cdot 10^{5}$  &  $8.7\cdot 10^{6}$  \\
$26$  &  $49339914891701589053$  &  $873872819$  &  $203150306$  &  $6.5\cdot 10^{5}$  &  $1.9\cdot 10^{7}$  \\
$27$  &  $402890652358573525928$  &  $1935710217$  &  $443784326$  &  $1.6\cdot 10^{6}$  &  $4.2\cdot 10^{7}$  \\
$28$  &  $3313004165660965754922$  &  $4291690537$  &  $971213858$  &  $4.3\cdot 10^{6}$  &  $9.1\cdot 10^{7}$  \\
$29$  &  $27424185239545986820514$  &  $9523492671$  &  $2129084186$  &  $1.4\cdot 10^{7}$  &  $2.0\cdot 10^{8}$  \\
$30$  &  $228437994561962363104048$  &  $21150884205$  &  $4674743970$  &  $4.2\cdot 10^{7}$  &  $4.4\cdot 10^{8}$  \\
$31$  &  $1914189093351633702834757$  &  $47012202538$  &  $10279369333$  &  $1.2\cdot 10^{8}$  &  $9.6\cdot 10^{8}$  \\
\end{tabular}
\caption{Results of the computation of the number of 1324-avoiding permutations
of length $n$ using our C++ implementation.
The table includes the number of calls to the function $G$
that result in a cache hit and the number of calls that result in a cache miss.
For small $n$, the number of evaluations $G$ that would be required
without memorization are presented in the rightmost column. For large $n$,
the elapsed CPU time and memory usage of the memorized version are shown.}
\label{tab:compresults}
\end{small}
\end{table}

\end{document}